\newtheorem{theorem}{Theorem}[section]
\newtheorem{lemma}{Lemma}[section]
\newtheorem{remark}[theorem]{Remark}
\numberwithin{equation}{section}
\newenvironment{proof}[1][Proof]{\noindent\textbf{#1.} }{\ \rule{0.5em}{0.5em}}
\begin{document}

\title{General enegy decay for a viscoelastic equation of Kirchhoff type
with acoustic boundary conditions}
\author{Abdelkader MAATOUG \\
Loboratory of Energetic Engineering and Computer Engineering\\
Tiaret University\\
P.O.Box 78, Zaaroura, Tiaret 14000, Algeria.\\
E-mail: abdelkader.maatoug@univ-tiaret.dz\\
}
\date{}
\maketitle

\begin{abstract}
This paper is concerned with a viscoelastic equation of Kirchhoff type with
acoustic boundary conditions in a bounded domain of $\mathbb{R}^{n}.$ We
show that, under suitable conditions on the initial data, the solution
exists globally in time. Then, we prove the general energy decay of global
solutions by applying a lemma of P. Martinez, wihch allows us to get our
decay result for a class of relaxation functions wider than that usually
used.\newline

\ \newline
\textbf{Keywords and phrases :} Kirchhoff type equation, Energy decay,
Acoustic boundary conditions, Memory term, Multiplier method.\ \newline
\textbf{AMS Classification : } 35L72, 35A01, 35B40.
\end{abstract}

\section{Introduction}

In this paper we are concerned with the decay rates of solutions for the
following nonlinear wave equation of Kirchhoff type, with acoustic boundary
conditions

\begin{equation}
\left\{ 
\begin{tabular}{ll}
$u_{tt}-\left( a+b\left\Vert \nabla u\right\Vert _{2}^{2\gamma }\right)
\Delta u+\int_{0}^{t}g\left( t-s\right) \Delta u\left( s\right)
ds=|u|^{k-2}u,$ & in $\Omega \times (0,+\infty ),$ \\ 
$u=0,$ & on $\Gamma _{0}\times (0,+\infty ),$ \\ 
$\left( a+b\left\Vert \nabla u\right\Vert _{2}^{2\gamma }\right) \frac{%
\partial u}{\partial \nu }-\int_{0}^{t}g\left( t-s\right) \frac{\partial u}{%
\partial \nu }\left( s\right) ds=y_{t}$ & on $\Gamma _{1}\times (0,+\infty
), $ \\ 
$u_{t}+p\left( x\right) y_{t}+q\left( x\right) y=0$ & on $\Gamma _{1}\times
(0,+\infty ),$ \\ 
$u(.,0)=u_{0},u_{t}(.,0)=u_{1},$ & in $\Omega $, \\ 
$y(.,0)=y_{0},$ & on $\Gamma _{1}$,%
\end{tabular}%
\right.  \label{Prob}
\end{equation}%
where $\Omega $ is a bounded domain of $\mathbb{R}^{n},$ $n\geq 1$ having a
smooth boundary $\Gamma =\partial \Omega $, consisting of two closed and
disjoint parts $\Gamma _{0}$ and $\Gamma _{1}$. Here $\nu $ denotes the unit
outward normal to $\Gamma $. The parameters $a>0,\ b>0,$ $\gamma \geq 0$ and 
$k>2$ are constant real numbers, $p\ $and $q$ are given functions satisfying
some conditions to be specified later; $u_{0},u_{1}:\Omega \rightarrow 
\mathbb{R}$ and $\ y_{0}:\Gamma _{1}\rightarrow \mathbb{R}$ are given
functions.\medskip

The acoustic boundary conditions were introduced by Beale and Rosencrans in 
\cite{Beale1}, \cite{Beale2}; where the authors proved the global existence
and regularity of solutions of the wave equation subject to boundary
conditions of the form 
\begin{equation}
\left\{ 
\begin{tabular}{ll}
$\frac{\partial u}{\partial \nu }=y_{t}$ & on $\Gamma \times (0,+\infty ),$
\\ 
$\rho u_{t}+m\left( x\right) y_{tt}+p\left( x\right) y_{t}+q\left( x\right)
y=0$ & on $\Gamma \times (0,+\infty ),$%
\end{tabular}%
\right.  \label{beale-cond}
\end{equation}%
where $\rho >0$, $p$ is a nonnegative function and $m$, $q$ are strictly
positive functions on the boundary.

Wave equations with acoustic boundary conditions have been treated by many
authors \cite{Frota2005}, \cite{Mugnolo06}, \cite{Vicente09}, \cite{Park11}, 
\cite{Boukhatem}, \cite{Park acoustic 16} . In \cite{Frota2005}, the authors
studied a linear wave equation subject to the boundary conditions (\ref%
{beale-cond}), with $m\equiv 0$, on the portion $\Gamma _{1}$ of the
boundary and Dirichlet boundary conditions on the portion $\Gamma _{0}.$
They proved global solvability and uniform energy decay.

Boukhatem and Benabderrahmane \cite{Boukhatem}, studied the
variable-coefficient viscoelastic wave equation 
\begin{equation}
\left\{ 
\begin{array}{ll}
u_{tt}+Lu-\int_{0}^{t}g(t-s)Lu(s)ds=|u|^{k-2}u, & \text{in }\Omega \times
(0,\infty ), \\ 
u=0, & \text{on\ }\Gamma _{0}\times \ (0,\infty ), \\ 
\frac{\partial u}{\partial \nu _{L}}-\int_{0}^{t}g(t-s)\frac{\partial u}{%
\partial \nu _{L}}(s)ds=h(x)y_{t}, & \text{on }\Gamma _{1}\times \ (0,\infty
), \\ 
u_{t}+p\left( x\right) y_{t}+q\left( x\right) y=0, & \text{on }\Gamma
_{1}\times (0,\infty ),%
\end{array}%
\right.  \label{Boukhat-14}
\end{equation}%
where $Lu=-div(A\nabla u)$, $\frac{\partial u}{\partial \nu _{L}}=\left(
A\nabla u\right).\nu $ and $A=\left( a_{ij}(x)\right) $ a matrix with $%
a_{ij}\in C^{1}(\overline{\Omega })$. Combining the techniques used by
Georgiev and Todorova in \cite{Todorova}, those used by Frota and Larkin in 
\cite{Frota2005}, and Faedo--Galerkin's approximations, the authors proved
global solvability for suitable initial data, and general energy decay for
some relaxation functions $g$ satifaying some known condtions, introduced
firstly by Messaoudi in \cite{Messaoudi08}.\medskip

For quasilinear equations, the authors of \cite{Frota2000} studied the
Carrier equation 
\begin{equation}
\begin{tabular}{ll}
$u_{tt}-M\left( \left\Vert u\right\Vert _{2}^{2}\right) \Delta u+\left\vert
u_{t}\right\vert ^{\alpha }u_{t}=f(u),$ & in $\Omega \times (0,+\infty ),$%
\end{tabular}
\label{Pb-Frota-00}
\end{equation}%
subject to the boundary conditions (\ref{beale-cond}) on the portion $\Gamma
_{1}$ and Dirichlet boundary conditions on the portion $\Gamma _{0},$ where $%
M:\mathbb{R}_{+}\longrightarrow \mathbb{R}$ is a $C^{1}$ function satisfying
some conditions. They proved the existence and uniqueness of global
solutions. In \cite{Yong2012}, the author studied the following problem 
\begin{equation}
\left\{ 
\begin{tabular}{ll}
$u_{tt}-M\left( \left\Vert \nabla u\right\Vert _{2}^{2}\right) \Delta
u+2\delta u_{t}=0$ & in $\Omega \times (0,+\infty ),$ \\ 
$u=0,$ & on $\Gamma _{0}\times (0,+\infty ),$ \\ 
$M\left( \left\Vert \nabla u\right\Vert _{2}^{2}\right) \frac{\partial u}{%
\partial \nu }=y_{t}$ & on $\Gamma _{1}\times (0,+\infty ),$ \\ 
$u_{t}+p\left( x\right) y_{t}+q\left( x\right) y=0$ & on $\Gamma _{1}\times
(0,+\infty ),$%
\end{tabular}%
\right.  \label{Pb-Yong-12}
\end{equation}%
where $M\left( \left\Vert \nabla u\right\Vert _{2}^{2}\right) =a+b\left\Vert
\nabla u\right\Vert _{2}^{2}$, with $a,b>0.$ He proved the uniform stability
of solutions for a sufficiently small positif passive viscous damping
coeficient $\delta $, using multiplier technique.\newline
Recently, Lee et all in \cite{Park acoustic 16} were concerned with the
following Kirchhoff type equation with Balakrishnan-Taylor damping,
time-varying delay and boundary conditions:%
\begin{equation}
\left\{ 
\begin{tabular}{ll}
$\left\vert u_{t}\right\vert ^{\rho }u_{tt}-\left( M\left( u\right) \left(
t\right) \right) \Delta u-\Delta u_{tt}$ &  \\ 
\ \ \ \ \ $+\int_{0}^{t}g\left( t-s\right) \Delta u\left( s\right)
ds+|u_{t}|^{q}u_{t}=|u|^{p}u$ & in $\Omega \times (0,+\infty ),$ \\ 
$u=0,$ & on $\Gamma _{0}\times (0,+\infty ),$ \\ 
$\left( M\left( u\right) \left( t\right) \right) \frac{\partial u}{\partial
\nu }+\frac{\partial u_{tt}}{\partial \nu }-\int_{0}^{t}g\left( t-s\right) 
\frac{\partial u}{\partial \nu }\left( s\right) ds$ &  \\ 
\ \ \ \ \ \ \ \ $+\mu _{0}u_{t}\left( x,t\right) +\mu _{1}u_{t}\left(
x,t-\tau \left( t\right) \right) =h\left( x\right) y_{t}$ & on $\Gamma
_{1}\times (0,+\infty ),$ \\ 
$u_{t}+p\left( x\right) y_{t}+q\left( x\right) y=0$ & on $\Gamma _{1}\times
(0,+\infty ),$ \\ 
$u_{t}\left( x,t-\tau \left( t\right) \right) =f_{0}(x,t)$ & on $\Gamma
_{1},-\tau \left( 0\right) \leq t\leq 0$,%
\end{tabular}%
\right.  \label{Pb-Park 16}
\end{equation}%
where $M\left( u\right) \left( t\right) =a+b\left\Vert \nabla u\right\Vert
_{2}^{2}+\sigma \left( \nabla u,\nabla u_{t}\right) $, with $a,b,\sigma >0.$
They showed the global existence of solutions and established a general
energy decay \bigskip

Motivated by the previous works, we consider a non degenerate Kirchhoff type
wave equation with memory term, acoustic boundary conditions and source
term. This model is new because has not been considered by predecessors.
Further, we do not use differetial inequalities to prove our decay result,
but we use a method based on integral inequalities, introduced by P.
Martinez \cite{Martinez} (Lemma \ref{lemme Martinez} below) and that
generalize those introduced by V. Komornik \cite{Komornik 96} and A. Haraux 
\cite{Haraux 85}. This method allows us to consider a class of relaxation
functions larger than the one usually considered.

The paper is organized as follows. In Section 2, we present some notations
and material needed for our work and state an existence result, which can be
proved by combining the techniques used in \cite{Caval01} and \cite%
{Boukhatem}. In section 3, we prove that, for suitable initial data, the
solution exists globally in time. In this section, our proof technique
follows the arguments of \cite{Shun15}, with some modifications being needed
for our problem. Section 4 contains the statement and the proof of our main
result.

\section{ Preliminaries and some notations}

In this section, we present some notations and some material needed in the
proof of our result.

Let $V=\left\{ u\in H^{1}\left( \Omega \right) :u=0\text{ on }\Gamma
_{0}\right\} $ be the subspace of the classical Sobolev space $H^{1}\left(
\Omega \right) $ of real valued functions of order one. The Poincare's
inequality holds on $V$, i.e. there exists a positive constant $C_{\ast }$
(depends only on $\Omega $) such that: 
\begin{equation}
\left\Vert u\right\Vert _{k}\leq C_{\ast }\left\Vert \nabla u\right\Vert
_{2},\text{ \ \ for every }u\in V,~\text{and }1\leq k\leq \overline{k}
\label{Poinca-inega}
\end{equation}%
where $\left\Vert u\right\Vert _{k}^{k}=\int_{\Omega }\left\vert
u\right\vert ^{k}dx$, $\left\Vert \nabla u\right\Vert _{2}^{2}=\int_{\Omega
}\left\vert \nabla u\right\vert ^{2}dx$ and $\overline{k}=\left\{ 
\begin{array}{l}
\frac{2n}{n-2}~\text{, if }n\geq 3 \\ 
+\infty ~\text{, if }n=1\text{ or }2%
\end{array}%
\right. $.\newline

According to the trace theory, there exists a positive constant $\overline{C}%
_{\ast }$ (depends only on $\Gamma _{1}$) such that: 
\begin{equation}
\left\Vert u\right\Vert _{2,\Gamma _{1}}\leq \overline{C}_{\ast }\left\Vert
\nabla u\right\Vert _{2},\text{ for every }u\in V,  \label{2Poinca-inega}
\end{equation}%
where $\left\Vert u\right\Vert _{2,\Gamma _{1}}^{2}=\int_{\Gamma
_{1}}\left\vert u\right\vert ^{2}dx$.\bigskip

To prove our result, we need the following assumptions.\medskip \newline
\textbf{(H1)} For the functions $p$ and $q$, we assume that $p,q\in C(\Gamma
_{1})$ and $p(x)>0$, $q(x)>0$, for all $x\in \Gamma _{1}$. This assumption
assures us that there exist positive constants $p_{i},q_{i}\ (i\in \left\{
0,1\right\} )$ such that: 
\begin{equation}
0<p_{0}\leq p(x)\leq p_{1};\ 0<q_{0}\leq q(x)\leq q_{1},\ \text{for all }%
x\in \Gamma _{1}.  \label{cond-on-p-q}
\end{equation}%
\textbf{(H2)} For the relaxation function $g:\mathbb{R}_{+}\rightarrow 
\mathbb{R}_{+}$, we assume that :%
\begin{equation}
g\left( 0\right) >0\ \ \ \text{ and }\ \ a-\int_{0}^{\infty }g\left( \
s\right) ds=l>0.  \label{Hypothese on g}
\end{equation}

and there exists a locally absolutely continuous function\ \ $\xi :\left[
0,+\infty \right) \rightarrow \left( 0,+\infty \right) $, and constants $%
\theta \geq 0,r$ such that

\begin{equation}
\int_{0}^{\infty }\xi (s)ds=+\infty ,\ \ \ g^{\prime }(t)\leq -\xi (t)g(t),%
\text{ for all }t\geq 0,.  \label{Old hypothesis on g' and ksi}
\end{equation}

\begin{equation}
\frac{\xi ^{\prime }}{\xi ^{\theta }}\in L^{1}\left( 0,+\infty \right) ,\ \
\ \int_{t}^{t+s}\frac{\xi ^{\prime }(\tau )}{\xi (\tau )}d\tau \leq r,\text{
for all }t,s\geq 0,.  \label{New hypothesis on ksi}
\end{equation}

The last assumption assures us that $\xi \left( t+s\right) \leq e^{r}\xi
\left( t\right) ,$ for all $t,s\geq 0,$ and so $\xi \in L^{\infty }\left(
0,+\infty \right) .$

\begin{remark}
(i) Notice that, if $\xi $ is a nonincreasing function, then hypothesis (\ref%
{New hypothesis on ksi}) is trivially valid with $\theta =$ $r=0$.

(ii) Notice also that, proofs in previous papers depend strongly on the
nonincreasence of $\xi .$ We prove our result without this
restriction.\bigskip
\end{remark}

We now state a local existence theorem for problem (\ref{Prob}), whose proof
follows the arguments in \cite{Boukhatem}.

\begin{theorem}[Local existence]
\label{th local exist}Let $2<k\leq \frac{2n-2}{n-2}$ , $(u_{0},u_{1})\in $ $%
V\times L^{2}(\Omega )$ $\ $and $y_{0}\in L^{2}(\Gamma _{1})$. Suppose that
hypotheses (H1)--(H2) hold, then there exists a unique pair of functions $%
(u,y)$, which is a solution of the problem (\ref{Prob}) such that 
\begin{equation*}
\begin{tabular}{ll}
$u\in C(0,T;V),$ & $u_{t}\in C(0,T;L^{2}(\Omega )),$ \\ 
$y\in L^{\infty }(0,T;L^{2}(\Gamma _{1})),\ \ \ $ & $y_{t}\in
L^{2}(0,T;L^{2}(\Gamma _{1}));$%
\end{tabular}%
\end{equation*}%
for some $T>0$.
\end{theorem}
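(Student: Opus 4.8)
The plan is to prove local existence by the Faedo--Galerkin method, following the arguments of \cite{Boukhatem} and \cite{Caval01}, and adapting them to the Kirchhoff coefficient $a+b\Vert\nabla u\Vert_2^{2\gamma}$ and the acoustic boundary coupling. First I would set up a Galerkin basis: let $\{w_j\}_{j\geq1}\subset V\cap H^2(\Omega)$ be a basis of $V$ (for instance eigenfunctions of $-\Delta$ with mixed Dirichlet/Neumann conditions), and seek approximate solutions $u^m(t)=\sum_{j=1}^m c_j^m(t)w_j$ together with an auxiliary sequence $y^m$ for the boundary variable. The last boundary condition $u_t+p(x)y_t+q(x)y=0$ on $\Gamma_1$ is an ODE in $y$ for each fixed $x$, so one can either treat $y^m$ as solving this relation driven by $u^m$, or project the boundary data onto a basis of $L^2(\Gamma_1)$. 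Substituting into the variational formulation of (\ref{Prob}) yields a system of nonlinear ODEs for the coefficients $c_j^m$, whose local solvability on some interval $[0,t_m)$ follows from the Cauchy--Peano or Picard theorem once one checks that the Kirchhoff coefficient and the source $|u|^{k-2}u$ give locally Lipschitz (or at least continuous) nonlinearities in the finite-dimensional coordinates.

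Next I would derive the a priori energy estimates that let the local solutions be continued to a fixed interval $[0,T]$ independent of $m$. Multiplying the approximate equation by $(c_j^m)'(t)$ and summing, I would obtain the natural energy identity, using integration by parts to move $\Delta$ onto the test function and producing the boundary terms on $\Gamma_1$; the memory term $\int_0^t g(t-s)\Delta u(s)\,ds$ is handled by the standard identity that rewrites $\int_0^t g(t-s)\nabla u(s)\cdot\nabla u_t(t)\,ds$ in terms of $\tfrac{d}{dt}$ of $(g\diamond\nabla u)$ minus controlled remainders, exploiting $g'\leq -\xi g\leq0$ and $g\geq0$ from (H2). The acoustic boundary terms combine, via the relation $u_t=-p y_t-q y$ on $\Gamma_1$, into a nonnegative boundary energy contribution plus a dissipative term $\int_{\Gamma_1}p\,y_t^2$, using $p,q>0$ from (H1). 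The source term $|u|^{k-2}u$ is estimated through the Sobolev/Poincar\'e embedding (\ref{Poinca-inega}), which holds because $2<k\leq\frac{2n-2}{n-2}\leq\overline{k}$, together with Young's inequality; a Gronwall argument then bounds $\Vert u^m_t\Vert_2^2+\Vert\nabla u^m\Vert_2^2+\Vert y^m\Vert_{2,\Gamma_1}^2$ uniformly on $[0,T]$.

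With uniform bounds in hand I would extract, by Banach--Alaoglu and Aubin--Lions compactness, weak-$\ast$ limits $u^m\rightharpoonup u$ in $L^\infty(0,T;V)$, $u^m_t\rightharpoonup u_t$ in $L^\infty(0,T;L^2(\Omega))$, and $y^m\rightharpoonup y$ in the appropriate boundary spaces, with strong convergence of $u^m$ in $L^2(0,T;L^2(\Omega))$ sufficient to pass to the limit in the nonlinear terms. The only nonlinear terms requiring care are the Kirchhoff factor $b\Vert\nabla u^m\Vert_2^{2\gamma}$ and the source $|u^m|^{k-2}u^m$: for the former, the uniform $H^1$ bound plus strong $L^2$ convergence of gradients (again from Aubin--Lions) gives convergence of the scalar coefficient, while for the latter the subcritical growth from $k\leq\frac{2n-2}{n-2}$ makes $s\mapsto|s|^{k-2}s$ map bounded sets of $V$ into an $L^p$ space compactly embedded enough to identify the weak limit. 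Passing to the limit in the variational identity then shows $(u,y)$ solves (\ref{Prob}) with the stated regularity. Finally, uniqueness follows by the standard energy argument applied to the difference of two solutions, where the Kirchhoff and source nonlinearities are controlled by the a priori bounds through their local Lipschitz behaviour on bounded sets.

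\medskip

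\noindent I expect the main obstacle to be the passage to the limit in the coupled boundary terms on $\Gamma_1$: reconciling the memory-modified Neumann condition $\bigl(a+b\Vert\nabla u\Vert_2^{2\gamma}\bigr)\tfrac{\partial u}{\partial\nu}-\int_0^t g(t-s)\tfrac{\partial u}{\partial\nu}(s)\,ds=y_t$ with the ODE constraint for $y$ requires enough trace regularity to give meaning to the normal derivatives and to justify the boundary integrations by parts at the Galerkin level, and then to pass these to the weak limit. Controlling $y_t$ only in $L^2(0,T;L^2(\Gamma_1))$ (as the conclusion asserts) rather than in a stronger space is exactly what forces the careful treatment of this coupling.
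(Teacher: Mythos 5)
A preliminary remark on the comparison: the paper contains no actual proof of Theorem \ref{th local exist} — it merely states that the proof ``follows the arguments in \cite{Boukhatem}'', combined (per the introduction) with the techniques of \cite{Caval01}. Your Faedo--Galerkin program (approximate solutions, first-order energy estimates, compactness, passage to the limit, uniqueness by an energy argument on differences) is exactly that intended route, so in outline you and the paper agree.

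There is, however, one step in your sketch that fails as stated, and it is precisely the step where problem (\ref{Prob}) differs from the model treated in \cite{Boukhatem}: the passage to the limit in the Kirchhoff coefficient $b\Vert\nabla u^m\Vert_2^{2\gamma}$. From the first-order energy bounds ($u^m$ bounded in $L^\infty(0,T;V)$ and $u^m_t$ bounded in $L^\infty(0,T;L^2(\Omega))$), Aubin--Lions yields strong convergence of $u^m$ in $C([0,T];L^2(\Omega))$, using the compact embedding $V\hookrightarrow L^2(\Omega)$; it does \emph{not} yield strong convergence of $\nabla u^m$ in $L^2(\Omega)$. For that you would need a uniform bound in a space compactly embedded in $V$ (an $H^2$-type bound), i.e.\ second-order Galerkin estimates, which your energy identity does not provide and which in turn would require initial data more regular than $(u_0,u_1)\in V\times L^2(\Omega)$. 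Identifying the weak-$\ast$ limit of the scalar $\Vert\nabla u^m\Vert_2^{2\gamma}$ is the notorious difficulty of Kirchhoff-type equations, and it is not covered by the cited references as you invoke them: the principal part in \cite{Boukhatem} is linear (no Kirchhoff coefficient at all), while \cite{Caval01} handles a Kirchhoff--Carrier model only with additional structure (higher-order estimates on the approximations, hence stronger data). The same missing regularity resurfaces in your uniqueness step: testing the difference equation for two solutions $u,v$ produces terms such as $b\left(\Vert\nabla u\Vert_2^{2\gamma}-\Vert\nabla v\Vert_2^{2\gamma}\right)\int_\Omega\nabla v\cdot\nabla(u-v)_t\,dx$, which cannot be given meaning, let alone controlled, for solutions merely in the class asserted by the theorem. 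So the missing idea is a mechanism producing strong $V$-convergence (or at least convergence of the gradient norms $\Vert\nabla u^m(t)\Vert_2\to\Vert\nabla u(t)\Vert_2$) of the approximations — for instance second-order estimates on the Galerkin system together with a density argument on the data — and without it both the limit passage in the Kirchhoff term and the uniqueness argument are incomplete.
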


\section{Global existence}

In this section, we shall state and prove the global existence. For this
purpose, we define the energy of problem (\ref{Prob}) by 
\begin{equation}
E\left( t\right) =\frac{1}{2}\left\Vert u_{t}\right\Vert _{2}^{2}+J\left(
u,y\right) \left( t\right) ,  \label{E-formula}
\end{equation}%
where 
\begin{multline}
J\left( u,y\right) \left( t\right) =\frac{1}{2}\left( a-\left(
\int_{0}^{t}g\left( s\right) ds\right) \right) \left\Vert \nabla
u\right\Vert _{2}^{2}+\frac{b}{\gamma +1}\left\Vert \nabla u\right\Vert
_{2}^{2\left( \gamma +1\right) }  \label{J- formula} \\
+\int_{\Gamma _{1}}\frac{1}{2}\left( q\left( x\right) y^{2}\right) dx+\frac{1%
}{2}\left( g\diamond \nabla u\right) \left( t\right) -\frac{1}{k}\left\Vert
u\right\Vert _{k}^{k},
\end{multline}%
and 
\begin{equation}
\left( g\diamond \nabla u\right) \left( t\right) =\int_{0}^{t}g\left(
t-s\right) \left\Vert \nabla u\left( t\right) -\nabla u\left( s\right)
\right\Vert _{2}^{2}ds.  \label{def-losenge}
\end{equation}

\begin{lemma}
\label{Lemma of E' formula} Let $(u,y)$ be the solution of (\ref{Prob}).
Then, the energy functional defined by (\ref{E-formula}) is a nonincreasing
function and 
\begin{equation}
E^{\prime }\left( t\right) =-\frac{1}{2}g\left( t\right) \left\Vert \nabla
u\right\Vert _{2}^{2}+\frac{1}{2}\left( g^{\prime }\diamond \nabla u\right)
\left( t\right) -\int_{\Gamma _{1}}p\left( x\right) y_{t}^{2}dx\text{, for
all }t\geq 0.  \label{E'-formula}
\end{equation}
\end{lemma}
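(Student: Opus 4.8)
The plan is to apply the multiplier method: I would multiply the first equation of (\ref{Prob}) by $u_t$, integrate over $\Omega$, and read off $E'(t)$ from the resulting identity. Writing $\frac{d}{dt}\tfrac12\|u_t\|_2^2=\int_\Omega u_t u_{tt}\,dx$ and substituting $u_{tt}$ from the equation, I get
\begin{equation*}
\frac{d}{dt}\tfrac12\|u_t\|_2^2=\big(a+b\|\nabla u\|_2^{2\gamma}\big)\!\int_\Omega u_t\Delta u\,dx-\int_\Omega u_t\!\int_0^t g(t-s)\Delta u(s)\,ds\,dx+\int_\Omega |u|^{k-2}u\,u_t\,dx.
\end{equation*}
Green's formula converts $\int_\Omega u_t\Delta u\,dx$ into $-\int_\Omega\nabla u\cdot\nabla u_t\,dx+\int_{\Gamma_1}u_t\frac{\partial u}{\partial\nu}\,dx$ (the integral over $\Gamma_0$ vanishes because $u=0$, hence $u_t=0$, there), and the memory term is treated the same way, producing an interior term $+\int_\Omega\nabla u_t\cdot\int_0^t g(t-s)\nabla u(s)\,ds\,dx$ and a boundary term. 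The source term contributes $\frac1k\frac{d}{dt}\|u\|_k^k$.

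The crucial computation is the time derivative of the memory functional. Using Leibniz' rule together with $\|\nabla u(t)-\nabla u(t)\|_2^2=0$, I would obtain
\begin{equation*}
\tfrac12\frac{d}{dt}(g\diamond\nabla u)(t)=\tfrac12(g'\diamond\nabla u)(t)+\tfrac12\Big(\int_0^t g(s)\,ds\Big)\frac{d}{dt}\|\nabla u\|_2^2-\int_\Omega\nabla u_t\cdot\int_0^t g(t-s)\nabla u(s)\,ds\,dx.
\end{equation*}
The last integral cancels exactly the interior memory term coming from the equation. The Kirchhoff contribution $-\frac12\big(a+b\|\nabla u\|_2^{2\gamma}\big)\frac{d}{dt}\|\nabla u\|_2^2$ is absorbed by differentiating the first two (elastic) terms of $J(u,y)$ together with the $\frac12\big(\int_0^t g\big)\frac{d}{dt}\|\nabla u\|_2^2$ term just produced, so that all the $\frac{d}{dt}\|\nabla u\|_2^2$ pieces collapse and only $-\frac12 g(t)\|\nabla u\|_2^2$ survives from the coefficient $\frac12(a-\int_0^t g)$, alongside $\frac12(g'\diamond\nabla u)(t)$.

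It remains to handle the boundary. Combining the two boundary integrals via the third line of (\ref{Prob}),
\begin{equation*}
\big(a+b\|\nabla u\|_2^{2\gamma}\big)\frac{\partial u}{\partial\nu}-\int_0^t g(t-s)\frac{\partial u}{\partial\nu}(s)\,ds=y_t,
\end{equation*}
gives the single term $\int_{\Gamma_1}u_t y_t\,dx$. Differentiating the remaining term of $J$ yields $\frac{d}{dt}\big[\frac12\int_{\Gamma_1}q y^2\,dx\big]=\int_{\Gamma_1}q y\,y_t\,dx$. I would then substitute the fourth line of (\ref{Prob}), $u_t=-p(x)y_t-q(x)y$ on $\Gamma_1$, to get $\int_{\Gamma_1}u_t y_t\,dx=-\int_{\Gamma_1}p y_t^2\,dx-\int_{\Gamma_1}q y\,y_t\,dx$; the two $\int_{\Gamma_1}q y\,y_t$ terms cancel, leaving $-\int_{\Gamma_1}p(x)y_t^2\,dx$. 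Collecting everything (the source terms $\pm\frac1k\frac{d}{dt}\|u\|_k^k$ also cancel) yields precisely (\ref{E'-formula}). Finally, $E'\le0$ follows because $g\ge0$, $p(x)\ge p_0>0$ by (H1), and $g'(t)\le-\xi(t)g(t)\le0$ by (\ref{Old hypothesis on g' and ksi}), whence $(g'\diamond\nabla u)(t)\le0$; each term on the right of (\ref{E'-formula}) is therefore nonpositive and $E$ is nonincreasing.

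The step I expect to be most delicate is the differentiation of the memory functional $(g\diamond\nabla u)$ and the verification that its cross term cancels the convolution term $\int_\Omega\nabla u_t\cdot\int_0^t g(t-s)\nabla u(s)\,ds\,dx$ coming from the equation, while the $\big(\int_0^t g\big)\frac{d}{dt}\|\nabla u\|_2^2$ piece merges correctly with the time-dependent elastic coefficient; this is the bookkeeping where sign or factor-of-two errors typically creep in. A secondary caveat is that the whole computation is formal at the regularity of Theorem \ref{th local exist}, so rigorously it should be performed first on the smooth Faedo--Galerkin approximations and then passed to the limit, or justified by a density argument.
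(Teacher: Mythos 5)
Your proposal is correct and follows essentially the same route as the paper: multiply the first equation of (\ref{Prob}) by $u_{t}$, integrate over $\Omega$, integrate by parts using the third boundary condition to produce $\int_{\Gamma_{1}}u_{t}y_{t}\,dx$, and then substitute the fourth boundary condition $u_{t}=-p(x)y_{t}-q(x)y$ to obtain $-\int_{\Gamma_{1}}p(x)y_{t}^{2}\,dx-\frac{d}{dt}\int_{\Gamma_{1}}\frac{1}{2}q(x)y^{2}\,dx$. The only difference is presentational: you spell out the Leibniz-rule computation for $\frac{d}{dt}(g\diamond\nabla u)(t)$ and the sign of each term in $E'$, which the paper leaves implicit, and your closing caveat about justifying the formal computation on Galerkin approximations is sound but equally implicit in the paper.
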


\begin{proof}
{} Multiplying the first equation in (\ref{Prob}) by $u_{t}$, integrating
over $\Omega $, using integration by parts and exploiting the third equation
in system (\ref{Prob}), we get 
\begin{multline}
\frac{d}{dt}\left( \frac{1}{2}\left\Vert u_{t}\right\Vert _{2}^{2}+\frac{1}{2%
}a\left\Vert \nabla u\right\Vert _{2}^{2}+\frac{b}{2\left( \gamma +1\right) }%
\left\Vert \nabla u\right\Vert _{2}^{2\left( \gamma +1\right) }-\frac{1}{2}%
\left( \int_{0}^{t}g\left( \ s\right) ds\right) \left\Vert \nabla
u\right\Vert _{2}^{2}\right.  \label{formula1-proof-E'} \\
\left. +\frac{1}{2}\left( g\diamond \nabla u\right) \left( t\right) -\frac{1%
}{k}\left\Vert u\right\Vert _{k}^{k}\right) =\int_{\Gamma _{1}}u_{t}y_{t}dx-%
\frac{1}{2}g\left( t\right) \left\Vert \nabla u\right\Vert _{2}^{2}dx+\frac{1%
}{2}\left( g^{\prime }\diamond \nabla u\right) \left( t\right)
\end{multline}%
From the fourth equation in (\ref{Prob}), we deduice that 
\begin{equation}
\int_{\Gamma _{1}}u_{t}y_{t}dx=-\int_{\Gamma _{1}}p\left( x\right)
y_{t}^{2}dx-\frac{d}{dt}\left( \int_{\Gamma _{1}}\frac{1}{2}\left( q\left(
x\right) y^{2}\right) dx\right) .  \label{formula2-proof-E'}
\end{equation}%
Plugging (\ref{formula2-proof-E'}) into (\ref{formula1-proof-E'}) and making
use of (\ref{E-formula}), then we obtain (\ref{E'-formula}), and hence $%
E^{\prime }\left( t\right) \leq 0,\ \ $for all $t\geq 0$.\bigskip
\end{proof}

As in \cite{Caval07} and \cite{Shun15}, we define a functional $F$, which
helps in establishing the global existence of solution.

Setting

\begin{equation}
F(x)=\frac{1}{2}x^{2}-\frac{B_{\Omega }^{k}}{k}x^{k}  \label{Def of F}
\end{equation}

where

\begin{equation}
B_{\Omega }=\underset{u\in V,\ u\neq 0}{\sup }\frac{\left\Vert u\right\Vert
_{k}}{\sqrt{l\left\Vert \nabla u\right\Vert _{2}^{2}+\frac{b}{\gamma +1}%
\left\Vert \nabla u\right\Vert _{2}^{2\left( \gamma +1\right) }}}
\label{Def of  B}
\end{equation}

We can verify, as in \cite{Caval07}, that the function $F$ is increasing in $%
\left( 0,\lambda _{1}\right) $ and decreasing in $\left( \lambda
_{1},+\infty \right) ,$ where $\lambda _{1}$ is the strictly positive zero
of the derivative function $F^{\prime },$ that is

\begin{equation}
\lambda _{1}=B_{\Omega }^{\frac{-k}{k-2}}.  \label{value of lamda1}
\end{equation}%
$F$ has a maximum at $\lambda _{1}$ with the maximum value

\begin{equation}
d_{1}=F(\lambda _{1})=\left( \frac{k-2}{2k}\right) B_{\Omega }^{\frac{-2k}{%
k-2}}  \label{F maximum value}
\end{equation}%
From (\ref{E-formula}), (\ref{J- formula}), (\ref{Def of B}), (\ref%
{Hypothese on g}) and (\ref{Def of F}), we have

\begin{multline}
E\left( t\right) \geq J\left( u,y\right) \left( t\right) =\frac{1}{2}\left(
a-\left( \int_{0}^{t}g\left( s\right) ds\right) \right) \left\Vert \nabla
u\right\Vert _{2}^{2}+\frac{b}{2\left( \gamma +1\right) }\left\Vert \nabla
u\right\Vert _{2}^{2\left( \gamma +1\right) }  \label{E(t)abovF(gamma)} \\
+\int_{\Gamma _{1}}\frac{1}{2}\left( q\left( x\right) y^{2}\right) dx+\frac{1%
}{2}\left( g\diamond \nabla u\right) \left( t\right) -\frac{1}{k}\left\Vert
u\right\Vert _{k}^{k} \\
\geq \frac{1}{2}\gamma ^{2}\left( t\right) -\frac{1}{k}B_{\Omega }^{k}\gamma
^{k}\left( t\right) =F\left( \gamma \left( t\right) \right) \\
\end{multline}

where 
\begin{equation*}
\gamma \left( t\right) =\sqrt{l\left\Vert \nabla u\right\Vert _{2}^{2}+\frac{%
b}{\left( \gamma +1\right) }\left\Vert \nabla u\right\Vert _{2}^{2\left(
\gamma +1\right) }+\int_{\Gamma _{1}}q\left( x\right) y^{2}dx+\left(
g\diamond \nabla u\right) \left( t\right) }.
\end{equation*}

If $\gamma (t)<\lambda _{1}$, then we get 
\begin{multline}
E\left( t\right) \geq F\left( \gamma \left( t\right) \right) =\gamma
^{2}\left( t\right) \left( \frac{1}{2}-\frac{1}{k}B_{\Omega }^{k}\gamma
^{k-2}\left( t\right) \right)  \label{Minoration of E with gamma} \\
>\gamma ^{2}\left( t\right) \left( \frac{1}{2}-\frac{1}{k}B_{\Omega
}^{k}\lambda _{1}^{k-2}\right) \\
>\frac{k-2}{2k}\gamma ^{2}\left( t\right) . \\
\end{multline}

\begin{lemma}
\label{Lemma of stable set} Let $2<k\leq \frac{2n-2}{n-2}$, $%
(u_{0},u_{1})\in $ $V\times L^{2}(\Omega )$, $y_{0}\in L^{2}(\Gamma _{1})$
and hypotheses (H1)--(H2) hold. Assume further that $E\left( 0\right) <d_{1}$
and\newline
$\gamma \left( 0\right) =\sqrt{l\left\Vert \nabla u_{0}\right\Vert _{2}^{2}+%
\frac{b}{\left( \gamma +1\right) }\left\Vert \nabla u_{0}\right\Vert
_{2}^{2\left( \gamma +1\right) }+\int_{\Gamma _{1}}q\left( x\right)
y_{0}^{2}dx}<\lambda _{1}.$ Then 
\begin{tabular}{c}
$E\left( t\right) <d_{1}$%
\end{tabular}%
$~$\newline
and~$\gamma \left( t\right) =\sqrt{l\left\Vert \nabla u\right\Vert _{2}^{2}+%
\frac{b}{\left( \gamma +1\right) }\left\Vert \nabla u\right\Vert
_{2}^{2\left( \gamma +1\right) }+\int_{\Gamma _{1}}q\left( x\right)
y^{2}dx+\left( g\diamond \nabla u\right) \left( t\right) }<\lambda _{1},$
for all $t\in \lbrack 0,T)$.
\end{lemma}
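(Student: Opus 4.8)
The plan is to split the statement into its two assertions, handling the bound on $E$ by monotonicity and the bound on $\gamma$ by a continuity (potential–well) argument. The inequality $E(t)<d_1$ is immediate: by Lemma \ref{Lemma of E' formula} the energy is nonincreasing, so $E(t)\le E(0)<d_1$ for every $t\in[0,T)$. This step requires no further work and does not even use the hypothesis $\gamma(0)<\lambda_1$; it is needed only to supply the strict upper bound $E(t)<d_1$ that will be contradicted below.

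For the bound $\gamma(t)<\lambda_1$ I would argue by contradiction, exploiting the inequality $E(t)\ge F(\gamma(t))$ established in (\ref{E(t)abovF(gamma)}) together with the fact that $F$ attains its maximum $d_1$ precisely at $\lambda_1$ (see (\ref{value of lamda1}) and (\ref{F maximum value})). Suppose the conclusion fails on $[0,T)$. Since $\gamma(0)<\lambda_1$, continuity of the map $t\mapsto\gamma(t)$ and the intermediate value theorem produce a first crossing time $t_0\in(0,T)$ with $\gamma(t_0)=\lambda_1$; concretely, one sets $t_0=\inf\{t\in[0,T):\gamma(t)\ge\lambda_1\}$ and uses continuity to force the equality $\gamma(t_0)=\lambda_1$. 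Evaluating the two facts above at $t_0$ then yields $E(t_0)\ge F(\gamma(t_0))=F(\lambda_1)=d_1$, which contradicts $E(t_0)\le E(0)<d_1$ from the first part. Hence no crossing time exists and $\gamma(t)<\lambda_1$ holds throughout $[0,T)$.

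The step I expect to be the main (and essentially the only) technical point is justifying the continuity of $\gamma(\cdot)$ needed to run this argument. Here I would invoke the regularity furnished by Theorem \ref{th local exist}: since $u\in C(0,T;V)$, the map $t\mapsto\|\nabla u(t)\|_2^2$ is continuous, and the same regularity together with the local continuity of $g$ makes the memory term $(g\diamond\nabla u)(t)$ continuous in $t$. For the boundary contribution $\int_{\Gamma_1}q(x)y^2\,dx$, I would note that $y\in L^\infty(0,T;L^2(\Gamma_1))$ with $y_t\in L^2(0,T;L^2(\Gamma_1))$ embeds into $C([0,T];L^2(\Gamma_1))$, so this term is continuous as well. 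Summing the four continuous, nonnegative contributions under the square root and using continuity of $x\mapsto\sqrt{x}$ gives continuity of $\gamma$, after which the contradiction argument above closes the proof.
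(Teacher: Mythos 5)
Your proposal is correct and takes essentially the same approach as the paper: both arguments combine the monotonicity of $E$ (giving $E(t)\le E(0)<d_{1}$), the potential-well inequality $E(t)\geq F\left( \gamma \left( t\right) \right) $ from (\ref{E(t)abovF(gamma)}), and the continuity of $t\mapsto \gamma \left( t\right) $ in a contradiction argument. The only differences are minor: the paper introduces the smaller root $\lambda _{0}$ of $F=E(0)$ and runs a two-case argument to conclude the slightly stronger bound $\gamma \left( t\right) \leq \lambda _{0}$, whereas you use a first-crossing-time argument directly at $\lambda _{1}$ via $F(\lambda _{1})=d_{1}$; your version is a bit more streamlined and, unlike the paper, explicitly justifies the continuity of $\gamma $ (via $u\in C(0,T;V)$ and $y\in H^{1}(0,T;L^{2}(\Gamma _{1}))\hookrightarrow C([0,T];L^{2}(\Gamma _{1}))$) that both proofs rely on.
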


\begin{proof}
Using the fact that $E(t)$ is a non-increasing function and (\ref%
{E(t)abovF(gamma)}) , we get

\begin{equation*}
E(t)\leq E(0)<d_{1}~\text{and }F\left( \gamma \left( t\right) \right)
<d_{1}~\ \text{for all\ }t\in \lbrack 0,T).
\end{equation*}%
From (\ref{Minoration of E with gamma}) and the fact that $E(0)<d_{1}$, it
follows that there exist $\lambda _{0},\lambda _{2}$ such that $0<\lambda
_{0}<\lambda _{1}<\lambda _{2}$ and $F(\lambda _{0})=E(0)=F(\lambda _{2}).$
As $F\left( \gamma \left( 0\right) \right) \leq E(0)=F(\lambda _{0})$ and $%
\lambda _{0},\gamma \left( 0\right) \in $ $\left( 0,\lambda _{1}\right) $
where the function $F$ is increasing, then $\gamma \left( 0\right) \leq
\lambda _{0}.$\medskip

We argue by contradiction to prove that $\gamma \left( t\right) \leq \lambda
_{0}$,$\ $for all\ $t\in \lbrack 0,T).$\newline
Suppose that there exists $t^{\ast }\in (0,T)$ such that $\gamma (t^{\ast
})>\lambda _{0}.$ We have two cases.\medskip \newline
\underline{Case 1}: $\lambda _{0}<\gamma (t^{\ast })<\lambda _{1},$ then, by
virtue of the increasince of $F$ and the non-increasince of $E$, we get

$F\left( \gamma (t^{\ast })\right) >F\left( \lambda _{0}\right) =E(0)\geq
E(t^{\ast }),$ which contradicts (\ref{E(t)abovF(gamma)}).\medskip \newline
\underline{Case 2}: $\lambda _{1}\leq \gamma (t^{\ast }),$ then, by virtue
of the continuity of $\gamma $ on $(0,t^{\ast }),$ there exists $t_{0}\in
\left( 0,t^{\ast }\right) $ such that $\lambda _{0}<\gamma (t_{0})<\lambda
_{1},$ which yields a contradiction as in case 1.

Thus, $\gamma \left( t\right) \leq \lambda _{0}$, and so $\gamma \left(
t\right) \leq \lambda _{1}\ $for all\ $t\in \lbrack 0,T).$
\end{proof}

\begin{theorem}
\label{th global exit}Let $\left( u\left( t\right) ,y\left( t\right) \right) 
$ be the solution of (\ref{Prob}). If $(u_{0},u_{1})\in $ $V\times
L^{2}(\Omega ),\ y_{0}\in L^{2}(\Gamma _{1})$ such that, $\gamma \left(
0\right) =\sqrt{l\left\Vert \nabla u_{0}\right\Vert _{2}^{2}+\frac{b}{\left(
\gamma +1\right) }\left\Vert \nabla u_{0}\right\Vert _{2}^{2\left( \gamma
+1\right) }+\int_{\Gamma _{1}}q\left( x\right) y_{0}^{2}dx}<\lambda _{1}$
and $E\left( 0\right) <d_{1}$, then the solution $\left( u\left( t\right)
,y\left( t\right) \right) $ is global in time.
\end{theorem}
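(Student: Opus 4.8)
The plan is to combine the local existence result (Theorem \ref{th local exist}) with a uniform a priori bound coming from the invariance of the stable set (Lemma \ref{Lemma of stable set}), and then to invoke the standard continuation principle. Theorem \ref{th local exist} furnishes a unique solution $(u,y)$ on a maximal interval $[0,T_{\max })$; to conclude that $T_{\max }=+\infty $ it suffices to show that the natural norm of $(u,u_{t},y)$ in $V\times L^{2}(\Omega )\times L^{2}(\Gamma _{1})$ stays bounded, on every finite time interval, by a constant depending only on the initial data. Indeed, if this norm cannot blow up in finite time, the local solution can be repeatedly extended, so no finite maximal time is possible.

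First I would apply Lemma \ref{Lemma of stable set}: under the standing hypotheses $\gamma (0)<\lambda _{1}$ and $E(0)<d_{1}$, one has $\gamma (t)<\lambda _{1}$ for all $t\in \lbrack 0,T_{\max })$, confining the solution to the region where the functional $F$ controls the nonlinear source term $|u|^{k-2}u$. On this region inequality (\ref{Minoration of E with gamma}) is available, and together with (\ref{E(t)abovF(gamma)}) and the monotonicity of $E$ from Lemma \ref{Lemma of E' formula} it yields
\[
\frac{k-2}{2k}\gamma ^{2}(t)<F\left( \gamma (t)\right) \leq J\left( u,y\right) (t)\leq E(t)\leq E(0).
\]
This chain gives two consequences at once. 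On the one hand $J\left( u,y\right) (t)>0$, so from the definition (\ref{E-formula}) we get $\tfrac{1}{2}\left\Vert u_{t}\right\Vert _{2}^{2}=E(t)-J(u,y)(t)\leq E(0)$. On the other hand $\gamma ^{2}(t)<\frac{2k}{k-2}E(0)$, and since every term in the definition of $\gamma $ is nonnegative, each is separately bounded; using $l>0$ from (\ref{Hypothese on g}) and $q(x)\geq q_{0}>0$ from (\ref{cond-on-p-q}) this produces
\[
\left\Vert \nabla u\right\Vert _{2}^{2}<\frac{2k}{l(k-2)}E(0),\qquad \left\Vert u_{t}\right\Vert _{2}^{2}\leq 2E(0),\qquad \left\Vert y\right\Vert _{2,\Gamma _{1}}^{2}<\frac{2k}{q_{0}(k-2)}E(0),
\]
together with boundedness of the memory term $\left( g\diamond \nabla u\right)(t)$. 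All of these constants are independent of $t$, hence the solution norm is uniformly bounded on the whole existence interval.

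The remaining step is routine: since $(u,u_{t},y)$ stays in a fixed bounded set of $V\times L^{2}(\Omega )\times L^{2}(\Gamma _{1})$, the continuation argument underlying Theorem \ref{th local exist} allows the solution to be extended past any finite time, forcing $T_{\max }=+\infty $ and proving global existence. The genuine obstacle is concentrated in the preceding step rather than in the continuation: the source term $|u|^{k-2}u$ is precisely what could drive $J(u,y)$ negative and destroy the a priori bound, so the whole argument hinges on the invariance $\gamma (t)<\lambda _{1}$ of Lemma \ref{Lemma of stable set}, which in turn is where the hypotheses $E(0)<d_{1}$ and $\gamma (0)<\lambda _{1}$ are indispensable. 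Without confinement to the stable set the term $-\tfrac{1}{k}\left\Vert u\right\Vert _{k}^{k}$ would no longer be dominated and the energy alone would not close the estimate.
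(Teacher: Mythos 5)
Your proposal is correct and follows essentially the same route as the paper: invoke Lemma \ref{Lemma of stable set} to confine the solution to the stable set, chain the inequalities $\frac{k-2}{2k}\gamma ^{2}(t)<F(\gamma (t))\leq J(u,y)(t)\leq E(t)\leq E(0)$ together with $\frac{1}{2}\Vert u_{t}\Vert _{2}^{2}\leq E(t)$ to obtain uniform bounds on $\Vert \nabla u\Vert _{2}$, $\Vert u_{t}\Vert _{2}$ and $\Vert y\Vert _{2,\Gamma _{1}}$, and conclude by continuation. You merely spell out the continuation principle and the term-by-term bounds more explicitly than the paper, which states them in a single displayed inequality.
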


\begin{proof}
Using Lemma \ref{Lemma of stable set}, (\ref{Minoration of E with gamma}), (%
\ref{E(t)abovF(gamma)}) and (\ref{E-formula}), we get 
\begin{multline}
\frac{1}{2}\left\Vert u_{t}\right\Vert _{2}^{2}+\frac{k-2}{2k}\gamma
^{2}\left( t\right) \leq \frac{1}{2}\left\Vert u_{t}\right\Vert
_{2}^{2}+F\left( \gamma \left( t\right) \right) \\
\leq \frac{1}{2}\left\Vert u_{t}\right\Vert _{2}^{2}+J\left( u,y\right)
\left( t\right) =E(t)\leq E(0)<d_{1}, \\
\end{multline}

which ensures the boundedness of $u$ in $V,$ $u_{t}$ in $L^{2}(\Omega )$ and 
$y$ in $L^{2}(\Gamma _{1}),$ and hence the solution $\left( u\left( t\right)
,y\left( t\right) \right) $ is bounded and global in time.
\end{proof}

\section{Decay result}

In order to study the decay estimate of global solution for the problem (\ref%
{Prob}), we need the following lemma.

\begin{lemma}
\label{lemme Martinez}(\cite{Martinez}) Let $E:%
\mathbb{R}
_{+}\rightarrow 
\mathbb{R}
_{+}$ be a non-increasing function and \newline
$\phi :%
\mathbb{R}
_{+}\rightarrow 
\mathbb{R}
_{+}$ a strictly increasing $C^{1}$ function such that 
\begin{equation*}
\phi (0)=0\quad \text{and}\quad \phi (t)\rightarrow +\infty \quad \text{as}%
\quad t\rightarrow +\infty .
\end{equation*}%
Assume that there exist $\sigma \geq 0$ and $\omega >0$ such that 
\begin{equation*}
\int_{S}^{+\infty }E^{1+\sigma }(t)\phi ^{\prime }(t)\,dt\leq \frac{1}{%
\omega }E^{\sigma }(0)E(S).\quad \text{for all }0\leq S<+\infty ,
\end{equation*}%
then 
\begin{equation*}
E(t)\leq E(0)\left( \frac{1+\sigma }{1+\omega \sigma \phi (t)}\right) ^{%
\frac{1}{\sigma }}\ \ \ \text{for all }t\geq 0,\quad \text{if}\quad \sigma
>0,
\end{equation*}%
\begin{equation*}
E(t)\leq E(0)e^{1-\omega \phi (t)}\text{ \ for all }t\geq 0,\quad \text{if}%
\quad \sigma =0.
\end{equation*}
\end{lemma}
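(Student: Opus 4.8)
The plan is to reduce the statement to the classical Komornik--Haraux integral inequality (the case $\phi(t)=t$) by a change of variables, and then to establish that inequality by combining a first-order differential inequality with the monotonicity of $E$. Since $\phi$ is a strictly increasing $C^{1}$ bijection of $\mathbb{R}_{+}$ onto itself, I would first set $\tilde{E}=E\circ\phi^{-1}$, which is again non-increasing with $\tilde{E}(0)=E(0)$. Performing the substitution $x=\phi(t)$ in the hypothesis (so that $dx=\phi'(t)\,dt$) and writing $T=\phi(S)$ turns the assumption into
\[
\int_{T}^{+\infty}\tilde{E}^{1+\sigma}(x)\,dx\leq \frac{1}{\omega}\tilde{E}^{\sigma}(0)\,\tilde{E}(T),\qquad \text{for all }T\geq 0.
\]
It then suffices to prove the two decay estimates for $\tilde{E}$ in the variable $T$ and to substitute back $T=\phi(t)$, using $\tilde{E}(\phi(t))=E(t)$.

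For the reduced problem with $\sigma>0$, I would introduce $f(S)=\int_{S}^{+\infty}\tilde{E}^{1+\sigma}(x)\,dx$, so that $f'(S)=-\tilde{E}^{1+\sigma}(S)$. The reduced hypothesis gives $\tilde{E}(S)\geq \omega\,\tilde{E}^{-\sigma}(0)f(S)$, and feeding this into the expression for $f'$ yields the differential inequality $f'(S)\leq -\beta f^{1+\sigma}(S)$ with $\beta=\omega^{1+\sigma}\tilde{E}^{-\sigma(1+\sigma)}(0)$. Integrating $(f^{-\sigma})'\geq \sigma\beta$ and using $f(0)\leq \omega^{-1}\tilde{E}^{1+\sigma}(0)$, I would obtain $f(S)\leq \omega^{-1}\tilde{E}^{1+\sigma}(0)\,(1+\sigma\omega S)^{-1/\sigma}$. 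Finally, monotonicity of $\tilde{E}$ gives, for any shift $a>0$, the bound $a\,\tilde{E}^{1+\sigma}(S+a)\leq f(S)$; writing $t=S+a$ and minimizing the resulting right-hand side over $a$ (the optimal choice being $a=(1+\sigma\omega t)/(\omega(1+\sigma))$, admissible once $t\geq 1/\omega$) produces exactly the constant $(1+\sigma)^{1/\sigma}$ and the claimed estimate; for $t\leq 1/\omega$ the estimate is trivial since its right-hand side is then $\geq \tilde{E}(0)$.

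The case $\sigma=0$ follows the same scheme with $f(S)=\int_{S}^{+\infty}\tilde{E}(x)\,dx$: here $f'\leq -\omega f$ gives $f(S)\leq f(0)e^{-\omega S}\leq \omega^{-1}\tilde{E}(0)e^{-\omega S}$, and the monotonicity bound $a\,\tilde{E}(S+a)\leq f(S)$ optimized over $a$ (optimal $a=1/\omega$) yields the factor $e$ and hence $\tilde{E}(T)\leq \tilde{E}(0)e^{1-\omega T}$. This is consistent with the first case, since letting $\sigma\to 0^{+}$ in $\big((1+\sigma)/(1+\sigma\omega T)\big)^{1/\sigma}$ gives $e^{1-\omega T}$.

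The main obstacle I anticipate is not the differential inequality, which is routine, but obtaining the sharp constants: the naive bound from monotonicity loses a factor, and it is the optimization over the shift parameter $a$ that recovers the exact constant $(1+\sigma)^{1/\sigma}$ (respectively $e$). One must also check that the optimal shift is admissible (i.e.\ $S=t-a\geq 0$), which forces the separate, but trivial, treatment of small $t$. The change of variables is harmless here because $\phi$ is assumed to be a $C^{1}$ increasing bijection; the only place its regularity is used is in rewriting the integral, after which $\phi$ enters the conclusion solely through the substitution $T=\phi(t)$.
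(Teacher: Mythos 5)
Your proof is correct. Note that the paper itself contains no proof of this lemma: it is quoted, with a citation, from Martinez \cite{Martinez}, and your argument is essentially the one given in that reference — the substitution $x=\phi(t)$ (using that $\phi$ is a strictly increasing $C^{1}$ bijection of $\mathbb{R}_{+}$ onto itself) reduces the statement to the classical Komornik--Haraux case $\phi(t)=t$, which is then handled by the differential inequality for $f(S)=\int_{S}^{+\infty}E^{1+\sigma}(x)\,dx$ and the optimization over the shift $a$ that recovers the sharp constants $(1+\sigma)^{1/\sigma}$ and $e$. The only points you leave implicit are routine: $f$ is locally absolutely continuous with $f'=-E^{1+\sigma}$ a.e., and if $f$ vanishes at some point then $E$ vanishes identically to the right of it, so the claimed bounds hold trivially there.
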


Our main result is the following

\begin{theorem}
Under assumptions of Theorem \ref{th local exist} and the assumption that $%
E\left( 0\right) <d_{1}$ and $\gamma \left( 0\right) <\lambda _{1}$, there
exists a positive constant $\omega $ depending on initial energy $E(0)$,
such that the solution energy of (\ref{Prob}) satisfies,

\begin{equation*}
E(t)\leq E(0)e^{1-\omega \int_{0}^{t}\xi (s)ds},\ \ \text{for all }t\geq 0
\end{equation*}
\end{theorem}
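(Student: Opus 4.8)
The plan is to apply the Martinez integral‐inequality lemma (Lemma \ref{lemme Martinez}) with the weight $\phi(t)=\int_0^t\xi(s)\,ds$ and exponent $\sigma=0$, so the whole proof reduces to establishing the master estimate
\begin{equation*}
\int_S^{+\infty}E(t)\,\xi(t)\,dt\leq \frac{1}{\omega}\,E(S),\quad\text{for all }0\leq S<+\infty.
\end{equation*}
Note that $\phi(0)=0$ and $\phi(t)\to+\infty$ follows from $\int_0^\infty\xi=+\infty$ in (\ref{Old hypothesis on g' and ksi}), while $\phi$ is strictly increasing $C^1$ since $\xi>0$; thus once the master estimate holds, the exponential conclusion with the exponent $1-\omega\int_0^t\xi(s)\,ds$ is immediate.

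To reach the master estimate I would follow the standard multiplier/perturbed-energy method. First I would introduce a perturbed functional $L(t)=E(t)+\varepsilon_1\Psi(t)+\varepsilon_2\chi(t)$, where $\Psi(t)=\int_\Omega u\,u_t\,dx$ (the natural multiplier $u_t$ applied against the equation) captures the ``elastic'' energy, and $\chi(t)=-\int_\Omega u_t\int_0^t g(t-s)\bigl(u(t)-u(s)\bigr)\,ds\,dx$ is the memory-correcting multiplier that controls the history term. Using (\ref{E'-formula}) from Lemma \ref{Lemma of E' formula} together with the global‐existence bounds of Lemma \ref{Lemma of stable set} (which keep $\gamma(t)<\lambda_1$ and hence give the crucial coercivity $\tfrac{k-2}{2k}\gamma^2(t)\le F(\gamma(t))\le E(t)$ from (\ref{Minoration of E with gamma})), I would differentiate $L$ and bound $\xi(t)L'(t)$ from above by $-cE(t)\xi(t)$ plus terms proportional to $g(t)\|\nabla u\|_2^2$ and to $(g\diamond\nabla u)(t)$. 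The source term $|u|^{k-2}u$ is absorbed using the stable‐set estimate: since $\gamma(t)<\lambda_1$, one controls $\|u\|_k^k\le B_\Omega^k\gamma^k(t)$ by a fraction strictly below $\tfrac{k-2}{2k}$ of $\gamma^2$, so the energy stays coercive.

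The step I expect to be the genuine obstacle is the treatment of the memory term $(g\diamond\nabla u)(t)$ without assuming $\xi$ is monotone. After the multiplier manipulations one is left with a term of the form $\int_S^\infty\xi(t)(g\diamond\nabla u)(t)\,dt$, and the classical argument integrates $g'\le-\xi g$ to dominate it by $-E'$; but here $\xi$ may increase. This is precisely where the second hypothesis (\ref{New hypothesis on ksi}) enters: the bound $\int_t^{t+s}\tfrac{\xi'(\tau)}{\xi(\tau)}\,d\tau\le r$ yields $\xi(t+s)\le e^r\xi(t)$, which lets me shift the weight $\xi(t)$ onto $\xi(t-s)$ inside the convolution and recover, up to the harmless constant $e^r$, the estimate
\begin{equation*}
\int_0^t g(t-s)\,\xi(t)\,\|\nabla u(t)-\nabla u(s)\|_2^2\,ds\le e^r\int_0^t\xi(t-s)\,g(t-s)\,\|\nabla u(t)-\nabla u(s)\|_2^2\,ds\le -e^r\,(g'\diamond\nabla u)(t),
\end{equation*}
so that this term is controlled by $-E'(t)$ via (\ref{E'-formula}). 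The integrability condition $\xi'/\xi^\theta\in L^1$ handles the remaining lower-order boundary contributions.

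Finally I would fix $\varepsilon_1,\varepsilon_2$ small so that $L$ is equivalent to $E$ (i.e. $\alpha_1 E\le L\le\alpha_2 E$) and $\xi L'\le -\omega_0 E\xi$. Integrating $\xi(t)L'(t)\le-\omega_0 E(t)\xi(t)$ over $[S,\infty)$, using the equivalence $L\sim E$ and $L(S)\le\alpha_2 E(S)$ together with $L(\infty)\ge0$, gives exactly $\int_S^\infty E(t)\xi(t)\,dt\le\tfrac1\omega E(S)$ for a suitable $\omega>0$ depending on $E(0)$. Applying Lemma \ref{lemme Martinez} in the case $\sigma=0$ then delivers the stated decay $E(t)\le E(0)e^{1-\omega\int_0^t\xi(s)\,ds}$, completing the proof.
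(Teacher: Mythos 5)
Your skeleton matches the paper's in its two essential points: both reduce the theorem to the master estimate $\int_S^{\infty}\xi(t)E(t)\,dt\le\tfrac1\omega E(S)$ followed by Lemma \ref{lemme Martinez} with $\sigma=0$ and $\phi(t)=\int_0^t\xi(s)\,ds$, and your weight-shifting trick for the memory term ($\xi(t)\le e^{r}\xi(t-s)$, hence $\xi(t)g(t-s)\le -e^{r}g'(t-s)$ and $\xi(t)(g\diamond\nabla u)(t)\le -2e^{r}E'(t)$) is exactly the paper's estimate (\ref{estim ksi losange}). Your two interior functionals also correspond to the paper's two interior multipliers: $\Psi$ to multiplying equation one by $\xi(t)u$, and $\chi$ to multiplying it by $\xi(t)\int_0^tg(t-s)(u(t)-u(s))\,ds$; whether one differentiates $L$ or integrates the identities over $\Omega\times(S,T)$ is only bookkeeping.

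However, there is a genuine gap: with $L=E+\varepsilon_1\Psi+\varepsilon_2\chi$ you cannot obtain $\xi L'\le-\omega_0\xi E$, because the energy of this problem contains the acoustic boundary term $\tfrac12\int_{\Gamma_1}q(x)y^2\,dx$, and nothing in $L'$ produces a negative multiple of it. Indeed, $E'$ in (\ref{E'-formula}) dissipates only $\int_{\Gamma_1}p(x)y_t^2\,dx$ (it controls $y_t$, never $y$); $\Psi'$ contributes the boundary cross term $\int_{\Gamma_1}uy_t\,dx$; and $\chi'$ contributes $-\int_{\Gamma_1}y_t\int_0^tg(t-s)(u(t)-u(s))\,ds\,dx$. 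None of these dominates $\int_{\Gamma_1}q(x)y^2\,dx$, so the most your argument can yield is $\int_S^\infty\xi\bigl(E-\tfrac12\int_{\Gamma_1}q(x)y^2\,dx\bigr)\,dt\le CE(S)$, to which Martinez's lemma does not apply. The paper closes precisely this hole with a third multiplier step: it multiplies the boundary equation $u_t+p(x)y_t+q(x)y=0$ of (\ref{Prob}) by $\xi(t)y$, integrates over $(S,T)\times\Gamma_1$, and after an integration by parts in time arrives at estimate (\ref{estim finale HOOH1}) for $\int_S^T\xi\int_{\Gamma_1}q(x)y^2\,dx\,dt$. In your Lyapunov language you would have to add a boundary functional, e.g. $\varepsilon_3\Theta$ with $\Theta(t)=\tfrac12\int_{\Gamma_1}p(x)y^2\,dx+\int_{\Gamma_1}uy\,dx$, for which the boundary equation gives $\Theta'=-\int_{\Gamma_1}q(x)y^2\,dx+\int_{\Gamma_1}uy_t\,dx$, the last term being absorbable as in (\ref{estim Hayh}). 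A secondary, fixable slip: integrating $\xi L'\le-\omega_0\xi E$ over $[S,\infty)$ does not give $[\xi L]_S^\infty$; integration by parts leaves $\int_S^\infty\xi'L\,dt$, whose sign is uncontrolled since $\xi$ is not monotone, and you must bound it by $\alpha_2\|\xi\|_\infty^\theta\bigl\Vert\xi'/\xi^\theta\bigr\Vert_{L^1}E(S)$ using (\ref{New hypothesis on ksi}) and the monotonicity of $E$ — which is exactly how the paper handles all of its $\xi'$ terms.
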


\begin{remark}
If $E(t_{0})=0$, for some $t_{0}\geq 0$; then from Lemma \ref{Lemma of E'
formula}, we have $E(t)=0,$ for all $t\geq t_{0}$, and then we have nothing
to prove in this case. So we assume that $E(t)>0,$ for all $t\geq 0$ without
loss of generality.\bigskip
\end{remark}

\begin{proof}
Since $g$ is positive and $g(0)>0$ then for any $t_{0}>0$ we have

\begin{equation}
\int_{0}^{t}g\left( s\right) ds\geq \int_{0}^{t_{0}}g\left( s\right)
ds=g_{0},\text{ for all }t\geq t_{0}  \label{def g0}
\end{equation}

Multiplying the first equation in (\ref{Prob}) by $\xi (t)u(t)$ and
integrating by parts over $\Omega \times (S,T),$ with $S\geq t_{0}$, we get

\begin{multline}
\int_{S}^{T}\int_{\Omega }\left( \xi (t)u.u_{t}\right) ^{\prime
}dxdt-\int_{S}^{T}\int_{\Omega }\xi ^{\prime
}(t)u.u_{t}dxdt-\int_{S}^{T}\int_{\Omega }\xi (t)u_{t}^{2}dxdt
\label{first egalite} \\
+\int_{S}^{T}\xi (t)\left( a-\int_{0}^{t}g\left( s\right) ds\right)
\left\Vert \nabla u\right\Vert _{2}^{2}dt+\int_{S}^{T}b\xi (t)\left\Vert
\nabla u\right\Vert _{2}^{2\left( \gamma +1\right) }dt \\
-\int_{S}^{T}\xi (t)\int_{0}^{t}g\left( t-s\right) \int_{\Omega }\left[
\nabla u\left( s\right) -\nabla u\left( t\right) \right] \nabla u\left(
t\right) dxdsdt \\
-\int_{S}^{T}\xi (t)\int_{\Gamma _{1}}y_{t}udxdt=\int_{S}^{T}\xi
(t)\left\Vert u\left( t\right) \right\Vert _{k}^{k}dt
\end{multline}

since $\gamma \geq 0,$ we deduce that

\begin{multline}
\int_{S}^{T}\xi (t)\left( a-\int_{0}^{t}g\left( s\right) ds\right)
\left\Vert \nabla u\right\Vert _{2}^{2}dt+\int_{S}^{T}\frac{b}{\left( \gamma
+1\right) }\xi (t)\left\Vert \nabla u\right\Vert _{2}^{2\left( \gamma
+1\right) }dt-\int_{S}^{T}\xi (t)\left\Vert u\left( t\right) \right\Vert
_{k}^{k}dt  \label{first estim} \\
\leq -\int_{S}^{T}\int_{\Omega }\left( \xi (t)u.u_{t}\right) ^{\prime
}dxdt+\int_{S}^{T}\xi ^{\prime }(t)\int_{\Omega
}u.u_{t}dxdt+\int_{S}^{T}\int_{\Omega }\xi (t)u_{t}^{2}dxdt \\
+\int_{S}^{T}\xi (t)\int_{0}^{t}g\left( t-s\right) \int_{\Omega }\left[
\nabla u\left( s\right) -\nabla u\left( t\right) \right] \nabla u\left(
t\right) dxdsdt+\int_{S}^{T}\xi (t)\int_{\Gamma _{1}}y_{t}udxdt
\end{multline}

Recalling the definition of $B_{\Omega }$ above and using (\ref{Minoration
of E with gamma}), we can estimate

\begin{multline}
\frac{\left\Vert u\right\Vert _{k}^{k}}{\left( \left( a-\int_{0}^{t}g\left(
s\right) ds\right) \left\Vert \nabla u\right\Vert _{2}^{2}+\frac{b}{\left(
\gamma +1\right) }\left\Vert \nabla u\right\Vert _{2}^{2\left( \gamma
+1\right) }\right) } \\
\leq \left( \frac{\left\Vert u\left( t\right) \right\Vert _{k}}{\sqrt{%
l\left\Vert \nabla u\right\Vert _{2}^{2}+\frac{b}{\left( \gamma +1\right) }%
\left\Vert \nabla u\right\Vert _{2}^{2\left( \gamma +1\right) }}}\right)
^{k}\left( \frac{2k}{k-2}E\left( t\right) \right) ^{\frac{k}{2}-1} \\
\leq B_{\Omega }^{k}\left( \frac{2k}{k-2}E\left( 0\right) \right) ^{\frac{k}{%
2}-1} \\
<B_{\Omega }^{k}\left( \frac{2k}{k-2}\left( \frac{k-2}{2k}\right) B_{\Omega
}^{\frac{-2k}{k-2}}\right) ^{\frac{k}{2}-1}=1 \\
\end{multline}

In view of this inequality there exists a constant $c>0$ independent of $t$,
such that

\begin{multline}
c\left( \int_{S}^{T}\frac{1}{2}\xi (t)\left( a-\int_{0}^{t}g\left( s\right)
ds\right) \left\Vert \nabla u\right\Vert _{2}^{2}dt+\int_{S}^{T}\xi (t)\frac{%
b}{2\left( \gamma +1\right) }\left\Vert \nabla u\right\Vert _{2}^{2\left(
\gamma +1\right) }dt\right.  \label{majoration Cintksi E(t)} \\
\left. -\frac{1}{k}\int_{S}^{T}\xi (t)\left\Vert u\left( t\right)
\right\Vert _{k}^{k}dt\right) \leq \int_{S}^{T}\xi (t)\left(
a-\int_{0}^{t}g\left( s\right) ds\right) \left\Vert \nabla u\right\Vert
_{2}^{2}dt \\
+\int_{S}^{T}\frac{b}{\left( \gamma +1\right) }\xi (t)\left\Vert \nabla
u\right\Vert _{2}^{2\left( \gamma +1\right) }dt-\int_{S}^{T}\xi
(t)\left\Vert u\left( t\right) \right\Vert _{k}^{k}dt.
\end{multline}

Adding some terms to both sides of the obove inequality and using (\ref%
{first estim}), we deduce

\begin{multline}
c\int_{S}^{T}\xi (t)E\left( t\right) dt\leq -\int_{S}^{T}\left( \xi
(t)\int_{\Omega }u.u_{t}dx\right) ^{\prime }dxdt+\int_{S}^{T}\xi ^{\prime
}(t)\int_{\Omega }u.u_{t}dxdt  \label{1ere estim de CInt ksi E} \\
+\int_{S}^{T}\xi (t)\int_{0}^{t}g\left( t-s\right) \int_{\Omega }\left[
\nabla u\left( s\right) -\nabla u\left( t\right) \right] \nabla u\left(
t\right) dxdsdt \\
+\left( 1+\frac{c}{2}\right) \int_{S}^{T}\xi (t)\left\Vert u_{t}\right\Vert
_{2}^{2}dt+\int_{S}^{T}\xi (t)\int_{\Gamma _{1}}y_{t}udxdt \\
+c\frac{1}{2}\int_{S}^{T}\xi (t)\int_{\Gamma _{1}}q\left( x\right)
y^{2}dxdt+c\frac{1}{2}\int_{S}^{T}\xi (t)\left( g\diamond \nabla u\right)
\left( t\right) dt
\end{multline}

Using the Cauchy-Schwarz's inequality, the Poincar\'{e}'s inequalities (\ref%
{Poinca-inega}), (\ref{Hypothese on g}) 
and the definition of energy (\ref{E-formula}), we obtain estimates as
follows

\begin{multline}
\left\vert \int_{\Omega }uu_{t}dx\right\vert \leq \sqrt{\frac{k}{l\left(
k-2\right) }}C_{\ast }\left\vert \int_{\Omega }u_{t}\sqrt{\frac{l\left(
k-2\right) }{k}}\frac{1}{C_{\ast }}udx\right\vert  \label{estim Hayh1} \\
\leq \frac{1}{2}\sqrt{\frac{k}{l\left( k-2\right) }}C_{\ast }\left(
\int_{\Omega }u_{t}^{2}dx+\frac{k-2}{k}\frac{l}{C_{\ast }^{2}}\int_{\Omega
}u^{2}dx\right) \\
\leq \frac{1}{2}\sqrt{\frac{k-2}{lk}}C_{\ast }\left( \left\Vert
u_{t}\right\Vert _{2}^{2}+\frac{k-2}{k}l\left\Vert \nabla u\right\Vert
_{2}^{2}\right) \\
\leq \sqrt{\frac{k-2}{lk}}C_{\ast }E\left( t\right) . \\
\end{multline}

Now, using Cauchy's inequality, (\ref{E-formula}), (\ref{Hypothese on g})
and a technique used in \cite{Messaoudi08}, we get

\begin{multline}
\left\vert \int_{0}^{t}g\left( t-s\right) \int_{\Omega }\left[ \nabla
u\left( s\right) -\nabla u\left( t\right) \right] \nabla u\left( t\right)
dxds\right\vert \leq \epsilon \left\Vert \nabla u\left( t\right) \right\Vert
_{2}^{2}  \label{estim Hayh2} \\
+\frac{1}{4\epsilon }\int_{\Omega }\left( \int_{0}^{t}g\left( t-s\right)
\left( \nabla u\left( s\right) -\nabla u\left( t\right) \right) ds\right)
^{2}dx \\
\leq \epsilon \left\Vert \nabla u\left( t\right) \right\Vert _{2}^{2}+\frac{1%
}{4\epsilon }\int_{\Omega }\left( \int_{0}^{t}\!g(t-s)ds\right)
\int_{0}^{t}g(t-s)\left( \nabla u\left( s\right) -\nabla u\left( t\right)
\right) ^{2}dsdx \\
\leq \epsilon \frac{2k}{\left( k-2\right) l}E\left( t\right) +\frac{1}{%
4\epsilon }\left( a-l\right) \left( g\diamond u\right) \left( t\right) , \\
\end{multline}

for any $\epsilon >0.$

The following estimate is obtained using the trace theory, hypotesis (\ref%
{cond-on-p-q}), Poincar\'{e}'s inequalitie (\ref{2Poinca-inega}), H\"{o}%
lder's and Cauchy's inequalities

\begin{multline}
\left\vert \int_{\Gamma _{1}}u\left( t\right) y_{t}\left( t\right)
dx\right\vert \leq \frac{1}{\epsilon }\int_{\Gamma _{1}}y_{t}^{2}dx+\epsilon
\int_{\Gamma _{1}}u^{2}dx  \label{estim Hayh} \\
\leq \frac{1}{\epsilon }\frac{1}{p_{0}}\int_{\Gamma _{1}}p\left( x\right)
y_{t}^{2}dx+\epsilon \overline{C}_{\ast }^{2}\left\Vert \nabla u\right\Vert
_{2}^{2} \\
\leq \frac{1}{\epsilon }\frac{1}{p_{0}}\left( -E^{\prime }\left( t\right)
\right) +\epsilon \overline{C}_{\ast }^{2}\frac{2k}{\left( k-2\right) l}%
E\left( t\right) , \\
\end{multline}%
for any $\epsilon >0,$

To estimate the sixth term of (\ref{1ere estim de CInt ksi E}), we multiply
the fourth equation in (\ref{Prob}) by $\xi (t)y$, integrate by parts over $%
\left( S,T\right) \times \Gamma _{1}$ and use Cauchy's inequalitie:

\begin{multline}
\int_{S}^{T}\int_{\Gamma _{1}}\xi (t)q\left( x\right) y^{2}dxdt\leq
-\int_{S}^{T}\int_{\Gamma _{1}}\left( \xi (t)yu\right) ^{\prime
}dxdt+\int_{S}^{T}\int_{\Gamma _{1}}\xi ^{\prime }(t)yudxdt+\int_{S}^{T}\xi
(t)\int_{\Gamma _{1}}y_{t}udxdt \\
+\frac{1}{2}\int_{S}^{T}\int_{\Gamma _{1}}\xi (t)q\left( x\right) y^{2}dxdt+%
\frac{1}{2}\left\Vert \xi \right\Vert _{\infty }\frac{p_{1}}{q_{0}}%
\int_{S}^{T}\int_{\Gamma _{1}}p\left( x\right) y_{t}^{2}dxdt
\end{multline}

Using hypotesis (\ref{cond-on-p-q}), and (\ref{E'-formula}), we get

\begin{multline}
\frac{1}{2}\int_{S}^{T}\int_{\Gamma _{1}}\xi (t)q\left( x\right)
y^{2}dxdt\leq \left\vert \xi (S)\int_{\Gamma _{1}}y\left( S\right) u\left(
S\right) \right\vert +\left\vert \xi (T)\int_{\Gamma _{1}}y\left( T\right)
u\left( T\right) \right\vert +\int_{S}^{T}\left\vert \xi ^{\prime
}(t)\right\vert \left\vert \int_{\Gamma _{1}}yudx\right\vert
\label{estim avant avant HOOH1} \\
+\int_{S}^{T}\xi (t)\int_{\Gamma _{1}}y_{t}udxdt+\frac{1}{2}\left\Vert \xi
\right\Vert _{\infty }\frac{p_{1}}{q_{0}}\int_{S}^{T}\left( -E^{\prime
}\left( t\right) \right) dt
\end{multline}

Using the trace theory, (\ref{2Poinca-inega}), (\ref{E-formula}), H\"{o}%
lder's and Cauchy's inequalities, we get

\begin{multline}
\left\vert \int_{\Gamma _{1}}uydx\right\vert \leq \int_{\Gamma _{1}}\frac{1}{%
2q\left( x\right) }u^{2}dx+\int_{\Gamma _{1}}\frac{q\left( x\right) }{2}%
y^{2}dx  \label{estim int uy} \\
\leq \frac{\overline{C}_{\ast }^{2}}{2q_{0}}\left\Vert \nabla u\right\Vert
_{2}^{2}+\int_{\Gamma _{1}}\frac{q\left( x\right) }{2}y^{2}dx \\
\leq \frac{k}{\left( k-2\right) }\left( \frac{\overline{C}_{\ast }^{2}}{%
q_{0}l}+1\right) E\left( t\right) , \\
\end{multline}

Combining now (\ref{estim Hayh}), (\ref{estim avant avant HOOH1}), (\ref%
{estim int uy}) and the fact that $E$ is decreasing and $\xi $ is bounded,
we get

\begin{equation}
\int_{S}^{T}\int_{\Gamma _{1}}\xi (t)q\left( x\right) y^{2}dxdt\leq
c_{0}\left( \epsilon \right) E(S)+\epsilon c_{1}\int_{S}^{T}\xi (t)E\left(
t\right) dt  \label{estim finale HOOH1}
\end{equation}%
where $c_{0}\left( \epsilon \right) =4\left( \left\Vert \xi \right\Vert
_{\infty }\frac{k}{\left( k-2\right) }\left( \frac{\overline{C}_{\ast }^{2}}{%
q_{0}l}+1\right) +\frac{k}{\left( k-2\right) }\left( \frac{\overline{C}%
_{\ast }^{2}}{q_{0}l}+1\right) \left\Vert \xi \right\Vert _{\infty }^{\theta
}\left\Vert \frac{\xi ^{\prime }}{\xi ^{\theta }}\right\Vert _{1}+\left\Vert
\xi \right\Vert _{\infty }\frac{p_{1}}{q_{0}}+\left\Vert \xi \right\Vert
_{\infty }\frac{1}{\epsilon }\frac{1}{p_{0}}\right) \ $\newline
and $c_{1}=\overline{C}_{\ast }^{2}\frac{4k}{\left( k-2\right) l}.$\medskip

To estimate the third term of (\ref{1ere estim de CInt ksi E}), we multiply
the first equation in (\ref{Prob}) by $\xi
(t)\int_{0}^{t}g(t-s)(u(t)-u(s))ds $, integrate by parts over $\Omega \times
(S,T)$ and utilize (\ref{def g0}):

\begin{multline}
g_{0}\int_{S}^{T}\xi (t)\left\Vert u_{t}\right\Vert _{2}^{2}\leq
\int_{S}^{T}\left( \xi (t)\int_{\Omega
}\int_{0}^{t}g(t-s)(u(t)-u(s))ds.u_{t}\right) ^{\prime }dxdt
\label{estim Norme ut^2 first} \\
-\int_{S}^{T}\xi ^{\prime }(t)\int_{\Omega }\left(
\int_{0}^{t}g(t-s)(u(t)-u(s))ds\right) .u_{t}dxdt \\
-\int_{S}^{T}\xi (t)\int_{\Omega }u_{t}\int_{0}^{t}g^{\prime
}(t-s)(u(t)-u(s))dsdxdt \\
+\int_{S}^{T}\left( \xi (t)\left( \left( a+b\left\Vert \nabla u\right\Vert
_{2}^{2\gamma }\right) -\int_{0}^{t}g\left( s\right) ds\right)
\int_{0}^{t}g(t-s)\int_{\Omega }\nabla u(\nabla u(t)-\nabla u(s))dxds\right)
dt \\
+\int_{S}^{T}\xi (t)\int_{\Omega }\left( \int_{0}^{t}g\left( t-s\right)
\left( \nabla u(t)-\nabla u\left( s\right) \right) ds\right) ^{2}dxdt \\
-\int_{S}^{T}\left( \xi (t)\int_{\Gamma _{1}}y_{t}\left(
\int_{0}^{t}g(t-\tau )(u(t)-u(\tau ))d\tau \right) dx\right) dt \\
-\int_{S}^{T}\xi (t)\int_{0}^{t}g(t-s)\int_{\Omega }\left\vert u\left(
t\right) \right\vert ^{k-2}u\left( t\right) \left( u(t)-u(s)\right) dsdxdt \\
\end{multline}

As above, we can obtain the following estimates

\begin{multline}
\left\vert \int_{\Omega }\int_{0}^{t}g(t-s)(u(t)-u(s))u_{t}\left( t\right)
dsdx\right\vert \leq \frac{1}{2}\int_{\Omega }u_{t}^{2}\left( t\right) dx
\label{estim TOZ-2} \\
+\frac{1}{2}\int_{\Omega }\left( \int_{0}^{t}g(t-s)\left( u(t)-u(s)\right)
ds\right) ^{2}dx \\
\leq \frac{1}{2}\left\Vert u_{t}\right\Vert _{2}^{2}+\frac{1}{2}\left(
a-l\right) \!\int_{0}^{t}\!g(t-s)\!\int_{\Omega }\left( u(t)-u(s)\right)
^{2}dxds \\
\leq \frac{1}{2}\left\Vert u_{t}\right\Vert _{2}^{2}+\frac{1}{2}\left(
a-l\right) C_{\ast }^{2}\left( g\diamond u\right) \left( t\right) \\
\leq \left( 1+\left( a-l\right) C_{\ast }^{2}\frac{k}{k-2}\right) E(t), \\
\end{multline}

\begin{multline}
\left\vert \int_{\Omega }u_{t}\left( t\right) \int_{0}^{t}\left( g^{\prime
}(t-s)(u(t)-u(s))\right) dsdx\right\vert \leq \epsilon \int_{\Omega }\left(
u_{t}\left( t\right) \right) ^{2}dx  \label{estim TOZ-1} \\
+\frac{1}{4\epsilon }\int_{\Omega }\left( \int_{0}^{t}\left( g^{\prime
}(t-s)(u(t)-u(s))\right) ds\right) ^{2}dx \\
\leq \epsilon \left\Vert u_{t}\left( t\right) \right\Vert _{2}^{2}-\frac{1}{%
4\epsilon }g(0)\int_{0}^{t}g^{\prime }(t-s)\int_{\Omega }\left(
u(t)-u(s)\right) ^{2}dxds \\
\leq \epsilon \left\Vert u_{t}\left( t\right) \right\Vert _{2}^{2}-\frac{%
C_{\ast }^{2}}{4\epsilon }g(0)\left( g^{\prime }\diamond u\right) \left(
t\right) \\
\leq \epsilon 2E(t)+\frac{C_{\ast }^{2}}{2\epsilon }g(0)\left( -E^{\prime
}\left( t\right) \right) , \\
\end{multline}

and

\begin{multline}
\left\vert \int_{0}^{t}g(t-s)\int_{\Omega }\nabla u\left( t\right) (\nabla
u(t)-\nabla u(s))dxds\right\vert \leq \int_{\Omega }\epsilon \left( \nabla
u\left( t\right) \right) ^{2}dx  \label{TOZ0} \\
+\frac{1}{4\epsilon }\left( \int_{0}^{t}g(s)ds\right) \left( \int_{\Omega
}\int_{0}^{t}g(t-s)(\nabla u(t)-\nabla u(s))^{2}dsdx\right) \\
\leq \left( \epsilon \left\Vert \nabla u\right\Vert _{2}^{2}+\frac{\left(
a-l\right) }{4\epsilon }\left( g\diamond u\right) \left( t\right) \right) ,
\\
\end{multline}

Combining the estimate (\ref{TOZ0}), the fact that $\left\Vert \nabla
u\right\Vert _{2}^{2}\leq \frac{2k}{\left( k-2\right) l}E\left( t\right) $
and again $\int_{0}^{t}g(s)ds\leq a-l,$ we get

\begin{multline}
\left( \left( a+b\left\Vert \nabla u\right\Vert _{2}^{2\gamma }\right)
-\int_{0}^{t}g(s)ds\right) \int_{0}^{t}g(t-s)\int_{\Omega }\nabla u\left(
t\right) (\nabla u(t)-\nabla u(s))dxds  \label{TOZ 0+} \\
\leq \left( 2a-l+b\left( \frac{2k}{\left( k-2\right) l}E\left( 0\right)
\right) ^{\gamma }\right) \left( \epsilon \frac{2k}{\left( k-2\right) l}%
E\left( t\right) +\frac{\left( a-l\right) }{4\epsilon }\left( g\diamond
u\right) \left( t\right) \right) ,
\end{multline}

The trace theory, hypotesis (\ref{cond-on-p-q}), Poincar\'{e}'s inequalitie (%
\ref{2Poinca-inega}), H\"{o}lder's and Cauchy's inequalities, permit us to
get

\begin{multline}
\left\vert \int_{0}^{t}g(t-\tau )\int_{\Gamma }y_{t}\left( t\right)
(u(t)-u(\tau ))dxd\tau \right\vert \leq \frac{1}{2}\int_{\Gamma
_{1}}y_{t}^{2}\left( t\right) dx  \label{estim TOZ2} \\
+\frac{1}{2}\int_{\Gamma _{1}}\left( \int_{0}^{t}g(t-\tau )\left(
u(t)-u(\tau )\right) d\tau \right) ^{2}dx \\
\leq \frac{1}{2}\frac{1}{p_{0}}\int_{\Gamma _{1}}p\left( x\right)
y_{t}^{2}dx+\frac{1}{2}\left( a-l\right) \int_{0}^{t}g(t-\tau )\int_{\Gamma
_{1}}\left( \left( u(t)-u(\tau )\right) \right) ^{2}dxd\tau  \\
\leq \frac{1}{2}\frac{1}{p_{0}}\left( -E^{\prime }\left( t\right) \right) +%
\frac{1}{2}\left( a-l\right) \overline{C}_{\ast }^{2}\left( g\diamond
u\right) \left( t\right) . \\
\end{multline}%
Using the fact that $2<k\leq \frac{2n-2}{n-2}$ and Poincar\'{e}'s
inequalitie (\ref{Poinca-inega}), the last term on the right hand side of (%
\ref{estim Norme ut^2 first}), can be estimated as follows

\begin{multline}
\int_{0}^{t}g(t-s)\int_{\Omega }\left\vert u\left( t\right) \right\vert
^{k-2}u\left( t\right) \left( u(t)-u(s)\right) dxds\leq \epsilon
\int_{\Omega }\left\vert u\left( t\right) \right\vert ^{2\left( k-1\right)
}dx  \label{estim TOZ} \\
+\frac{1}{4\epsilon }\int_{\Omega }\left( \int_{0}^{t}g(t-s)\left(
u(t)-u(s)\right) ds\right) ^{2}dx \\
\leq \epsilon \left( C_{\ast }\left\Vert \nabla u\right\Vert _{2}\right)
^{2\left( k-1\right) }+\frac{1}{4\epsilon }\left( a-l\right) C_{\ast
}^{2}\left( g\diamond u\right) \left( t\right) \\
\leq \epsilon C_{\ast }^{2\left( k-1\right) }\left( \frac{2k}{\left(
k-2\right) l}E\left( t\right) \right) ^{k-1}+\frac{1}{4\epsilon }\left(
a-l\right) C_{\ast }^{2}\left( g\diamond u\right) \left( t\right) \\
\leq \epsilon \left( \frac{2kC_{\ast }^{2}}{\left( k-2\right) l}\right)
^{k-1}\left( E\left( 0\right) \right) ^{k-2}E\left( t\right) \\
+\frac{1}{4\epsilon }\left( a-l\right) C_{\ast }^{2}\left( g\diamond
u\right) \left( t\right) . \\
\end{multline}

Combining estimates (\ref{estim Norme ut^2 first})-(\ref{estim TOZ}), we get

\begin{equation}
\int_{S}^{T}\xi (t)\left\Vert u_{t}\right\Vert _{2}^{2}\leq c_{2}\left(
\epsilon \right) E(S)+\epsilon c_{3}\int_{S}^{T}\xi (t)E\left( t\right)
dt+c_{4}\left( \epsilon \right) \int_{S}^{T}\xi (t)\left( g\diamond u\right)
\left( t\right) dt  \label{estim finale de ut^2}
\end{equation}%
where $c_{2}\left( \epsilon \right) =g_{0}^{-1}\left( \left( 2\xi _{\infty
}+\left\Vert \xi \right\Vert _{\infty }^{\theta }\left\Vert \frac{\xi
^{\prime }}{\xi ^{\theta }}\right\Vert _{1}\right) \left( 1+\left(
a-l\right) C_{\ast }^{2}\frac{k}{k-2}\right) +\frac{\xi _{\infty }}{2}\left( 
\frac{C_{\ast }^{2}}{\epsilon }g(0)+\frac{1}{p_{0}}\right) \right) ,$\newline
$c_{3}=g_{0}^{-1}\left( 2+\frac{2k\left( 2a-l+b\left( \frac{2k}{\left(
k-2\right) l}E\left( 0\right) \right) ^{\gamma }\right) }{\left( k-2\right) l%
}+\left( \frac{2kC_{\ast }^{2}}{\left( k-2\right) l}\right) ^{k-1}\left(
E\left( 0\right) \right) ^{k-2}\right) $ and \newline
$c_{4}\left( \epsilon \right) =g_{0}^{-1}\left( \frac{\left( 2a-l+b\left( 
\frac{2k}{\left( k-2\right) l}E\left( 0\right) \right) ^{\gamma }\right)
\left( a-l\right) }{4\epsilon }+\left( a-l\right) \left( 1+\frac{1}{2}%
\overline{C}_{\ast }^{2}+\frac{1}{4\epsilon }C_{\ast }^{2}\right) \right) .$

From hypotesis (\ref{Old hypothesis on g' and ksi}) and (\ref{New hypothesis
on ksi}), it follows that

\begin{multline}
\int_{S}^{T}\xi (t)\left( g\diamond \nabla u\right) \left( t\right) dt\leq
e^{r}\int_{S}^{T}\int_{0}^{t}\xi (t-s)g\left( t-s\right) \left\Vert \nabla
u\left( t\right) -\nabla u\left( s\right) \right\Vert _{2}^{2}dsdt
\label{estim ksi losange} \\
\leq e^{r}\int_{S}^{T}-\left( g^{\prime }\diamond \nabla u\right) \left(
t\right) dt \\
\leq e^{r}\int_{S}^{T}\left( -2E^{\prime }\left( t\right) \right) dt \\
\leq 2e^{r}E\left( S\right) \\
\end{multline}

Consequently, using (\ref{1ere estim de CInt ksi E}), (\ref{estim Hayh1}), (%
\ref{estim Hayh2}), (\ref{estim finale de ut^2}), (\ref{estim Hayh}), (\ref%
{estim finale HOOH1}), (\ref{estim ksi losange}), (\ref{New hypothesis on
ksi}) and the fact that $E$ is decreasing , we get

\begin{equation}
c\int_{S}^{T}\xi (t)E\left( t\right) dt\leq c_{5}\left( \epsilon \right)
E\left( S\right) +\epsilon c_{6}\int_{S}^{T}\xi (t)E\left( t\right) dt
\end{equation}%
where $c_{5}\left( \epsilon \right) =\sqrt{\frac{k-2}{lk}}\left( 2\left\Vert
\xi \right\Vert _{\infty }C_{\ast }+C_{\ast }\left\Vert \xi \right\Vert
_{\infty }^{\theta }\left\Vert \frac{\xi ^{\prime }}{\xi ^{\theta }}%
\right\Vert _{1}\right) +c_{2}\left( \epsilon \right) \left( 1+\frac{c}{2}%
\right) +\left( \frac{1}{2\epsilon }\left( a-l\right) +2c_{4}\left( \epsilon
\right) \left( 1+c\right) \right) e^{r}$

$\ \ \ \ \ \ \ \ \ \ \ +\frac{1}{\epsilon }\frac{1}{p_{0}}\left\Vert \xi
\right\Vert _{\infty }+\frac{c}{2}c_{0}\left( \epsilon \right) $ and $c_{6}=%
\frac{2k}{\left( k-2\right) l}\left( 1+\overline{C}_{\ast }^{2}\right)
+c_{3}+\left( c_{1}+c_{3}\right) \frac{c}{2}$ \newline
and so, for $\epsilon $ small enough, we get

\begin{equation}
\int_{S}^{T}\xi (t)E\left( t\right) dt\leq \frac{c_{5}\left( \epsilon
\right) }{\left( c-\epsilon c_{6}\right) }E\left( S\right) ,\ \text{for all\ 
}S\geq t_{0},  \label{estim finale -1}
\end{equation}%
and if $0\leq S<t_{0},$ it suffices to observe that

\begin{multline}
\int_{S}^{T}\xi (t)E\left( t\right) dt=\int_{S}^{t_{0}}\xi (t)E\left(
t\right) dt+\int_{t_{0}}^{T}\xi (t)E\left( t\right) dt \\
\leq E\left( S\right) \int_{0}^{t_{0}}\xi (t)dt+\frac{c_{5}\left( \epsilon
\right) }{\left( c-\epsilon c_{6}\right) }E\left( t_{0}\right) \\
\end{multline}

therefore $\int_{S}^{T}\xi (t)E\left( t\right) dt\leq CE\left( S\right) ,\ $%
for all\ $S\geq 0\ $and some constant $C\geq 0$ independent of $S$ and $T$

Let $T\rightarrow +\infty ,$ applying Lemma \ref{lemme Martinez} (with $%
\sigma =0$ and $\phi \left( t\right) =\int_{0}^{t}\xi (\tau )d\tau $), we
conclude that $E(t)\leq E(0)e^{1-\omega \int_{0}^{t}\xi (\tau )d\tau },$\
for all $t\geq 0,$ for some $\omega =\omega \left( E\left( 0\right) ,\xi
;t_{0}\right) $
\end{proof}

\end{document}